\newtheorem{theorem}{Theorem}
\newtheorem{corollary}[theorem]{Corollary}
\newtheorem{conj}[theorem]{Conjecture}
\newtheorem{lemma}[theorem]{Lemma}
\theoremstyle{definition}
\theoremstyle{remark}
\numberwithin{equation}{section}
\numberwithin{theorem}{section}
\numberwithin{defn}{section}
\begin{document}
\title[Counterexamples to Zagier's Duality Conjecture]
 {Counterexamples to Zagier's Duality Conjecture on Nahm Sums}

\author{Liuquan Wang}
\address{School of Mathematics and Statistics, Wuhan University, Wuhan 430072, Hubei, People's Republic of China}
\email{wanglq@whu.edu.cn;mathlqwang@163.com}

\subjclass[2010]{11P84, 33D15, 33D60, 11F03}

\keywords{Nahm sums; Rogers--Ramanujan type identities; Zagier's duality conjecture; Bailey pairs}

\begin{abstract}
Given any positive integer $r$, Nahm's problem is to determine all $r\times r$ rational positive definite matrix $A$, $r$-dimensional rational vector $B$ and rational scalar $C$ such that the rank $r$ Nahm sum associated with $(A,B,C)$ is modular.  Around 2007, Zagier conjectured that if the rank $r$ Nahm sum for $(A,B,C)$ is modular, then so is the dual Nahm sum associated with $(A^{-1},A^{-1}B,B^\mathrm{T} A^{-1}B/2-{r}/{24}-C)$.  We construct some explicit rank four Nahm sums which are modular while their duals are not modular. This provides counterexamples to Zagier's duality conjecture.
\end{abstract}

\maketitle

\section{Introduction and Main Results}\label{sec-intro}
A central problem linking the theory of $q$-series and modular forms is to determine the modularity of certain $q$-hypergeometric series. In this aspect, Nahm \cite{Nahm1994,Nahmconf,Nahm2007} considered a particular important class of series. Let $A$ be a $r\times r$ positive definite matrix, $B$ a $r$-dimensional column vector and $C$ a scalar. Nahm's problem  is to find all such $A,B,C$ with rational entries such that the \emph{Nahm sum}
\begin{align}\label{eq-Nahm}
    f_{A,B,C}(q):=\sum_{n=(n_1,\dots,n_r)^\mathrm{T} \in \mathbb{N}^r}\frac{q^{\frac{1}{2}n^\mathrm{T}An+n^\mathrm{T}B+C}}{(q;q)_{n_1} \cdots (q;q)_{n_r}}
\end{align}
is modular, and such $(A,B,C)$ is usually referred as a rank $r$ \emph{modular triple}. Here and throughout this paper we assume $|q|<1$ and use standard $q$-series notation:
\begin{align}
    (a;q)_n:=\prod_{k=0}^{n-1} (1-aq^k), \quad n \in \mathbb{N}\cup \{\infty\}.
\end{align}

The motivation of Nahm's problem comes from physics. Modular Nahm sums are expected to be characters of some 2-dimensional rational conformal field theories. Meanwhile, Nahm sums also appear as characters of standard modules of some affine Kac--Moody Lie algebras.

A prototype and important example of modular Nahm sums comes from the famous Rogers--Ramanujan identities:
\begin{align}
    \sum_{n=0}^\infty\frac{q^{n^2}}{(q;q)_n}
    =
    \frac{1}{(q,q^4;q^5)_\infty}, \quad
    \sum_{n=0}^\infty\frac{q^{n^2+n}}{(q;q)_n}
    =
    \frac{1}{(q^2,q^3;q^5)_\infty}.\label{RR}
\end{align}
These identities show that $(2,0,-1/60)$ and $(2,1,11/60)$ are two rank one modular triples. They motivate people to find similar sum-to-product $q$-series identities which are usually referred as Rogers--Ramanujan type identities. We refer the reader to Sills' book \cite{Sills-book} for an elaborate introduction on this topic.

Around 2007, Zagier \cite{Zagier} studied Nahm's problem systematically.
He stated explicitly  some conditions on $A$ in terms of polynomial equations and Bloch groups so that $A$ is the matrix part of a modular triple. This was known as Nahm's conjecture and is still open so far. When the rank $r=1$, Zagier proved this conjecture by showing that there are exactly seven modular triples. However, when the rank $r\geq 2$, Nahm's conjecture needs further modification since Vlasenko and Zwegers \cite{VZ}  provided two counterexamples. Nevertheless, Calegari, Garoufalidis and Zagier \cite{CGZ} confirmed one direction of Nahm's conjecture.

After an extensive search, Zagier \cite[Tables 2-3]{Zagier} discovered a number of modular triples in the rank two and rank three cases. The modularity of these examples have now all be confirmed by the works of Zagier \cite{Zagier}, Vlasenko--Zwegers \cite{VZ}, Cherednik--Feigin \cite{Feigin}, Calinescu--Milas--Penn \cite{CMP}, Cao--Rosengren--Wang \cite{CRW} and the author \cite{Wang-rank2,Wang-rank3}.

Zagier \cite[p.\ 50, (f)]{Zagier}  remarked the following important observation.
\begin{conj}\label{conj-dual}
If $(A,B,C)$ is a rank $r$ modular triple, then
\begin{align}\label{id-dual-Zagier}
(A^\star, B^\star, C^\star)=(A^{-1},A^{-1}B,\frac{1}{2}B^\mathrm{T} A^{-1}B-\frac{r}{24}-C)
\end{align}
is also a rank $r$ modular triple.
\end{conj}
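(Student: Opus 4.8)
The plan is to establish the duality by tracking the two sources of modularity data attached to a Nahm sum --- the \emph{algebraic} data encoded in the Nahm equations together with its associated Bloch-group element, and the \emph{analytic} data encoded in the full asymptotic expansion as $q\to 1$ --- and to show that the involution $A\mapsto A^{-1}$ carries the data for $(A,B,C)$ to the data for $(A^\star,B^\star,C^\star)$. First I would recast modularity through the saddle-point analysis of $f_{A,B,C}$. Writing $q=e^{-t}$ and applying the Euler--Maclaurin / saddle-point method to \eqref{eq-Nahm}, the leading exponential behaviour as $t\to 0^+$ is governed by the critical points of the potential, i.e.\ by solutions $z=(z_1,\dots,z_r)$ of the \emph{Nahm equations}
$$1-z_i=\prod_{j=1}^r z_j^{A_{ij}},\qquad i=1,\dots,r.$$
The value of the Rogers dilogarithm at such a solution fixes the effective central charge, and modularity forces the element $\sum_i[z_i]$ to be torsion in the Bloch group together with a rationality condition on the subleading data; this necessary direction is the content of \cite{CGZ}.

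The algebraic heart of the argument is to exhibit the involution directly at the level of the Nahm equations. Taking logarithms and setting $u_i=\log z_i$, $v_i=\log(1-z_i)$, the equations read $v=Au$, hence $u=A^{-1}v$. Consequently the substitution $w_i=1-z_i$ satisfies $\log(1-w_i)=u_i=\sum_j(A^{-1})_{ij}\log w_j$, so $w$ solves the dual Nahm equations attached to $A^{-1}$, and $z\mapsto 1-z$ is an explicit involutive bijection between the solution sets for $A$ and for $A^{-1}$ (matching the involutivity of $A\mapsto A^{-1}$). Using the reflection relation $L(x)+L(1-x)=\pi^2/6$ for the Rogers dilogarithm $L$, one checks that the dilogarithm values, and hence the central charges, transform exactly as predicted by $C^\star=\tfrac12 B^{\mathrm T}A^{-1}B-\tfrac{r}{24}-C$: the quadratic term arises from completing the square in $\tfrac12 n^{\mathrm T}An+n^{\mathrm T}B$ under the linear change $u=A^{-1}v$, while the $-r/24$ shift is the Dedekind-eta contribution of the $r$ denominators $(q;q)_{n_i}$. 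This simultaneously forces $B^\star=A^{-1}B$ and shows that $\sum_i[z_i]$ is torsion for $A$ precisely when the dual element is torsion for $A^{-1}$.

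The hard part --- and the step I expect to be the genuine obstacle --- is to upgrade this symmetry of the \emph{asymptotic and Bloch-group data} into a statement about honest \emph{modularity}. The torsion and rationality conditions are only known to be necessary; for rank $r\ge 2$ they are not sufficient, as the counterexamples of Vlasenko--Zwegers \cite{VZ} to Nahm's original conjecture already show. Thus the involution $A\mapsto A^{-1}$ transports the leading algebraic data faithfully, but it does not obviously transport the finer analytic input --- the vanishing of the obstructions to a full modular transformation, which are controlled by the higher-order terms of the asymptotic expansion --- that distinguishes a genuinely modular Nahm sum from one that merely passes the Bloch-group test. Closing this gap is exactly the crux: one would need either a modularity criterion that is simultaneously necessary and sufficient and manifestly invariant under $A\mapsto A^{-1}$, or a direct construction of the product side of the dual identity. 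I would therefore concentrate all effort on this analytic step, and I anticipate that it is precisely here --- in the passage from matching asymptotics to matching actual modular forms --- that the argument is most fragile.
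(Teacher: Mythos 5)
You are attempting to prove a statement that is false, and the paper you are being compared against does not prove it either: Conjecture \ref{conj-dual} is the conjecture that this paper \emph{refutes}. Theorem \ref{thm-counterexample} gives explicit rank-four counterexamples: for the matrix $A$ and vectors $B_1,B_2$ in \eqref{exam-data-1}, the Nahm sums $f_{A,B_i,1/16}(q)$ are modular --- via the identities \eqref{thm-id-1}--\eqref{thm-id-2} and the index-reduction \eqref{122-f} they equal the eta quotients $3\eta^3(2\tau)/\eta^3(\tau)$ and $\eta^3(2\tau)/\eta^3(\tau)$ --- whereas the dual sums $f_{A^\star,B_i^\star,C'}(q)$, with $A^\star=A^{-1}$ and $B_i^\star=A^{-1}B_i$ as in \eqref{exam-data-2}, are shown via \eqref{WW-id-1}--\eqref{WW-id-2} to equal the expressions \eqref{dual-exp-1}--\eqref{dual-exp-2}: each is a nonzero linear combination of two eta quotients of weights $0$ and $1$, hence not modular for any choice of $C'$. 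So no proof of the statement can exist, and your outline cannot be completed.

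That said, your diagnosis of where the difficulty lies is accurate, and it is worth seeing why in light of the counterexample. The algebraic involution you describe is correct and standard: with $u_i=\log z_i$, $v_i=\log(1-z_i)$ the Nahm equations read $v=Au$, the map $z\mapsto 1-z$ carries solutions for $A$ to solutions for $A^{-1}$, and the reflection $L(x)+L(1-x)=\pi^2/6$ matches the dilogarithm data, so the Bloch-group element and the leading asymptotics transform exactly as \eqref{id-dual-Zagier} predicts. But this is precisely the evidence Zagier himself cited, and the counterexample shows it is strictly weaker than modularity: both $f_{A,B_1,1/16}(q)$ and its dual pass every test visible to the transported data (indeed the dual is a sum of two honest modular forms, just of \emph{different} weights, so its asymptotic expansion at every cusp is of modular type), yet only one of the pair is modular. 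The step you flagged as ``the genuine obstacle'' --- upgrading matched asymptotic and Bloch-group data to matched modularity --- is therefore not merely hard but impossible in general. The correct response to this statement is a disproof by explicit construction, which is what the paper does; the only salvageable content of your approach is as an explanation of why the conjecture is plausible and why, as Section \ref{sec-concluding} notes, counterexamples to it are rare.
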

This conjecture is supported by various evidences as discussed in \cite{Zagier}. First, the conditions on $A$ in Nahm's conjecture are invariant under $A\mapsto A^{-1}$. Second, the asymptotic behaviours of $f_{A,B,C}(q)$ and its dual series $f_{A^\star,B^\star,C^\star}(q)$ are closely related (see \cite[Eq.\ (38)]{Zagier}). Third, on the conformal field theory side, the involution $A\leftrightarrow A^{-1}$ is related to a duality found by Goddard--Kent--Olive \cite{GKO} and to the level-rank duality.
Moreover, since the dual of the rank two examples in \cite[Table 2]{Zagier} are contained in the same table, this  duality conjecture holds for these particular examples.

In 2023, Mizuno \cite{Mizuno} considered the following generalized Nahm sum:
\begin{align}\label{eq-general-Nahm-sum}
   \widetilde{f}_{A,B,C,D}(q):= \sum_{n=(n_1,\dots,n_r)^\mathrm{T} \in (\mathbb{Z}_{\geq 0})^r} \frac{q^{\frac{1}{2}n^\mathrm{T}ADn+n^\mathrm{T}B+C}}{(q^{d_1};q^{d_1})_{n_1}\cdots (q^{d_r};q^{d_r})_{n_r}}.
\end{align}
Here the notation is slightly rewritten, $D=\mathrm{diag}(d_1,\dots,d_r)$ ($d_1,\dots,d_r \in \mathbb{Z}_{>0}$),   $B \in \mathbb{Q}^r$ is a vector and  $C \in \mathbb{Q}$ is a scalar. Following \cite{Mizuno}, we call $A \in \mathbb{Q}^{r \times r} $ a symmetrizable matrix with the symmetrizer $D$ if $AD$ is symmetric positive definite. If $\widetilde{f}_{A,B,C,D}(q)$ is modular then we call $(A,B,C,D)$ a \emph{modular quadruple}.

Similar to Zagier's observation stated in Conjecture \ref{conj-dual}, Mizuno \cite[Conjecture 4.1]{Mizuno} proposed the following conjecture.
\begin{conj}\label{conj-Mizuno}
If $(A,B,C,D)$ is a modular quadruple, then $(A^\star,B^\star,C^\star,D^\star)$ is also modular quadruple where
\begin{align}\label{id-dual-Mizuno}
    A^\star=A^{-1}, \quad B^\star=A^{-1}B, \quad C^\star=\frac{1}{2}B^\mathrm{T} (AD)^{-1}B-\frac{\mathrm{tr} D}{24}-c, \quad D^\star=D.
\end{align}
\end{conj}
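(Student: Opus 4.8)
The plan is to establish the duality by studying how the generalized Nahm sum behaves under the modular $S$-transformation $\tau\mapsto -1/\tau$ and by identifying the dual Nahm sum $\widetilde{f}_{A^\star,B^\star,C^\star,D^\star}$ as the dominant contribution at the opposite cusp. Writing $q=e^{2\pi i\tau}$, modularity of $\widetilde{f}_{A,B,C,D}$ constrains its behavior as $\tau\to 0$ to be governed by a linear action of $S$ on a finite-dimensional space of modular forms, and the aim is to show that this action produces precisely the dual data recorded in \eqref{id-dual-Mizuno}.

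First I would set $q=e^{-\epsilon}$ and compute the full asymptotic expansion of $\widetilde{f}_{A,B,C,D}(e^{-\epsilon})$ as $\epsilon\to 0^{+}$ by the Laplace/Euler--Maclaurin method, approximating the lattice sum by an integral over $\mathbb{R}_{\geq 0}^r$. Here the factor $1/(q^{d_i};q^{d_i})_{n_i}$ contributes a dilogarithm term with the scaling $\epsilon\mapsto d_i\epsilon$, so the saddle point $z^\ast$ solves the $D$-weighted Nahm equations attached to $(A,D)$, and the leading exponential rate is $V/\epsilon$ with $V$ a sum of Rogers-dilogarithm values at $z^\ast$. The crucial structural input, the generalized analogue of Zagier's Eq.\ (38), is that this critical-value functional is exchanged with its dual under $(A,D)\mapsto(A^{-1},D)$ by means of the reflection and five-term relations for the dilogarithm, so that the two effective central charges add up to $\mathrm{tr}\,D$; the shifts $B^\star=A^{-1}B$ and $C^\star=\tfrac{1}{2} B^{\mathrm{T}}(AD)^{-1}B-\tfrac{\mathrm{tr}\,D}{24}-C$ should then emerge from completing the square in the Gaussian integral and tracking the overall $q$-power normalization.

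Next I would invoke modularity directly: since $\widetilde{f}_{A,B,C,D}$ is modular, it spans, together with its translates, a finite-dimensional $\mathrm{SL}_2(\mathbb{Z})$-representation, and $S$ sends it to a fixed linear combination of the basis characters. Matching the leading exponential and the subleading polynomial-in-$\epsilon$ corrections computed above, one would identify the principal component of $S\cdot\widetilde{f}_{A,B,C,D}$ with a scalar multiple of a $q$-series having exactly the exponent $\tfrac{1}{2} n^{\mathrm{T}}A^{-1}Dn + n^{\mathrm{T}}A^{-1}B + C^\star$ and product normalization of $\widetilde{f}_{A^\star,B^\star,C^\star,D^\star}$. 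If the dual sum is exactly this component, it is a $\mathbb{Z}$-linear image of a modular form and hence itself modular, which is the conjecture.

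The hard part, and where I expect the argument to stall, is this final identification. The asymptotic and representation-theoretic analysis pins down only the leading behavior near one cusp and the linear span of modular transforms, not the global analytic nature of $\widetilde{f}_{A^\star,B^\star,C^\star,D^\star}$ on the upper half-plane. In principle the $S$-image of a genuine modular form can be a sum of several $q$-series in which the dual Nahm sum appears only as one piece, or the dual sum may agree with a modular form asymptotically while its honest modular completion requires non-holomorphic Eichler-integral corrections, i.e.\ the dual is only quantum modular. Closing the gap would require an exact $q$-series identity realizing $\widetilde{f}_{A^\star,B^\star,C^\star,D^\star}$ in modular terms, for instance through an explicit Bailey-pair transformation; the absence of such an identity in certain rank-four cases is exactly the phenomenon the counterexamples of this paper isolate.
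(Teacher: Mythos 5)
The statement you set out to prove is not a theorem of this paper --- it is Mizuno's conjecture, and the entire point of the paper is to \emph{refute} it. No proof strategy can close your final gap, because the implication is false: the paper constructs explicit modular quadruples whose duals are not modular. Concretely, with $D=\mathrm{diag}(2,2,1)$ and $A$ as in \eqref{AB-nonmodular}, the identity \eqref{id-exam5-3} shows $\widetilde{f}_{A,B_0,-1/48,D}(q)=(-q^{1/2};q)_\infty$ is modular, while the dual sum is evaluated in \eqref{thm-id-0} as $4(1+q^{-1/2})(q^2;q^2)_\infty/(q^{1/2};q^{1/2})_\infty$, which no prefactor $q^{C'}$ can make modular; conversely, \eqref{thm-id-1}--\eqref{thm-id-2} show the duals $\widetilde{f}_{A^\star,B_i^\star,1/8,D^\star}$ ($i=1,2$) are modular (essentially $\eta^3(2\tau)/\eta^3(\tau)$) even though \eqref{WW-id-1}--\eqref{WW-id-2} express the originals as sums of two eta-quotients of weights $0$ and $1$, hence not modular. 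These evaluations are obtained not by asymptotics but by exact $q$-series manipulations: splitting the triple sum via \eqref{F-start}, then applying $q$-Gauss summation and the four new Bailey pairs of Lemma \ref{lem-new-BP} fed into \eqref{TBL} and \eqref{S2BL} to get \eqref{key-id-1}--\eqref{key-id-4}.

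To your credit, you located the precise soft spot: matching the saddle-point/dilogarithm asymptotics at $\tau\to 0$ (the generalized analogue of Zagier's Eq.\ (38)) pins down only the leading behavior at one cusp, and a $q$-series can agree asymptotically with a modular form while globally being a mixed-weight combination --- which is exactly the mechanism realized by \eqref{WW-id-1}--\eqref{WW-id-2} and \eqref{dual-exp-1}--\eqref{dual-exp-2}, where one weight dominates the other exponentially near the cusp. Two further steps in your sketch are also unsound even as heuristics: the $S$-image of a component of a vector-valued modular form need not itself be modular (a single component carries no $\mathrm{SL}_2(\mathbb{Z})$-invariance on its own), and nothing forces the dual Nahm sum to appear as a component of the transform at all. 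So where you wrote that the argument ``stalls'' at the final identification, the correct conclusion is stronger: the identification is impossible in general, and the honest resolution is the counterexample construction above rather than any repair of the asymptotic argument.
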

Clearly, when $D$ is the identity matrix, this is exactly Conjecture \ref{conj-dual}.

Following the numerical approach of Zagier \cite{Zagier}, Mizuno discovered a number of matrices $A$ of rank two and three as well as some corresponding vectors $B$, scalars $C$ and symmetrizers $D$ such that $\widetilde{f}_{A,B,C,D}(q)$ is modular. Besides two open conjectures in the rank two case and two exceptional rank three cases, the modularity of these quadruples $(A,B,C,D)$ have now been confirmed. In particular, Mizuno proved four examples in the rank two case, and the remaining cases were confirmed by Wang and the author \cite{WW23} except that some conjectural identities for two examples are open. The rank two examples in \cite[Table 1]{Mizuno} all support the truth of Conjecture \ref{conj-Mizuno}. The modularity of the rank three examples in \cite[Tables 2,3]{Mizuno} have all be proved by Wang and the author \cite{WW24-1,WW24-2} except that two particular quadruples in the fifth example of \cite[Table 3]{Mizuno} have been shown to be nonmodular. This example corresponds to the symmetrizer $D=\mathrm{diag}(2,2,1)$ and the matrix
\begin{align}\label{AB-nonmodular}
A=\begin{pmatrix}
2 & 2 & 2\\
2 & 4 & 4\\
1 & 2 & 3
\end{pmatrix}, \quad
AD=\begin{pmatrix}
4 & 4 & 2\\
4 & 8 & 4\\
2 & 4 & 3
\end{pmatrix}.
\end{align}
Mizuno stated ten vectors $B$ for this matrix, and the two nonmodular cases correspond to:
\begin{align}\label{Mizuno-exam5-vector}
B_1=
\begin{pmatrix}
    1\\0\\1/2
\end{pmatrix}, \quad B_2= \begin{pmatrix}
    3\\4\\5/2
\end{pmatrix}.
\end{align}
Wang and the author \cite[Theorem 1.4]{WW24-2} proved that
 \begin{align}
      &\sum_{i,j,k\ge 0}\frac{q^{\frac{3}{2}i^2+2j^2+4k^2+2ij+4ik+4jk+\frac{1}{2}i+j}}{(q;q)_i(q^2;q^2)_j(q^2;q^2)_k}=\frac{1}{4}\left(3\frac{(q^2;q^2)_\infty}{(q;q)_\infty}
      +\frac{(q;q)_\infty^3}{(q^2;q^2)_\infty}\right), \label{WW-id-1} \\
       &\sum_{i,j,k\ge 0}\frac{q^{\frac{3}{2}i^2+2j^2+4k^2+2ij+4ik+4jk+\frac{5}{2}i+3j+4k}}{(q;q)_i(q^2;q^2)_j(q^2;q^2)_k}=\frac{1}{4}q^{-1}\left(\frac{(q^2;q^2)_\infty}{(q;q)_\infty}
       -\frac{(q;q)_\infty^3}{(q^2;q^2)_\infty}  \right). \label{WW-id-2}
 \end{align}
This shows that $\widetilde{f}_{A,B_i,C,D}(q)$ ($i=1,2$) is not modular for any $C$.

Conjecture \ref{conj-Mizuno} motivates us to consider Nahm sums associated with the quadruples dual to \eqref{AB-nonmodular} and \eqref{Mizuno-exam5-vector} (with the same symmetrizer $D^\star=\mathrm{diag}(2,2,1)$):
\begin{equation}
\begin{split}
&A^\star=\begin{pmatrix} 1 & -1/2 & 0 \\
-1/2 & 1 & -1 \\
0 & -1/2  & 1
\end{pmatrix}, \quad   A^\star D^\star=\begin{pmatrix}
2 & -1 & 0 \\ -1 & 2 & -1 \\ 0 &-1 & 1
\end{pmatrix},   \\
&B_1^\star=\begin{pmatrix} 1 \\ -1 \\ 1/2 \end{pmatrix}, \quad B_2^\star=\begin{pmatrix} 1 \\ 0 \\ 1/2 \end{pmatrix}.
\end{split}
\end{equation}
We establish the following identities for these dual Nahm sums.
\begin{theorem}\label{thm-id}
We have
\begin{align}
\sum_{i,j,k\geq 0} \frac{q^{\frac{1}{2}i^2+j^2+k^2-ik-jk+\frac{1}{2}i+j-k}}{(q;q)_i(q^2;q^2)_j(q^2;q^2)_k}&=3\frac{(q^2;q^2)_\infty^3}{(q;q)_\infty^3}, \label{thm-id-1} \\
\sum_{i,j,k\geq 0} \frac{q^{\frac{1}{2}i^2+j^2+k^2-ik-jk+\frac{1}{2}i+j}}{(q;q)_i(q^2;q^2)_j(q^2;q^2)_k}&=\frac{(q^2;q^2)_\infty^3}{(q;q)_\infty^3}. \label{thm-id-2}
\end{align}
\end{theorem}
This leads to some surprising consequences. The identities \eqref{thm-id-1}--\eqref{thm-id-2} show that $\widetilde{f}_{A^\star,B_i^\star,1/8,D^\star}(q)$ ($i=1,2$) is modular though $\widetilde{f}_{A,B_i,C',D}(q)$ ($i=1,2$) is not modular for any $C'$. This provides counterexamples to  Conjecture \ref{conj-Mizuno}.

The key ingredient in our proof of the identities \eqref{thm-id-1} and \eqref{thm-id-2} is the following set of single-sum Rogers--Ramanujan type identities.
\begin{theorem}\label{thm-new-RR}
We have
\begin{align}
&\sum_{k=0}^\infty \frac{q^{k^2}(-1;q)_{2k}(-1;q^2)_k}{(q^2;q^2)_{2k}}=\frac{1}{2}\left(3\frac{(q^2;q^2)_\infty^3}{(q;q)_\infty^2(q^4;q^4)_\infty}-\frac{(q;q)_\infty^2(q^2;q^2)_\infty}{(q^4;q^4)_\infty}\right), \label{key-id-1} \\
&\sum_{k=0}^\infty \frac{q^{k^2+2k}(-1;q)_{2k}(-1;q^2)_k}{(q^2;q^2)_{2k}}=\frac{1}{2}\left(\frac{(q^2;q^2)_\infty^3}{(q;q)_\infty^2(q^4;q^4)_\infty}+\frac{(q;q)_\infty^2(q^2;q^2)_\infty}{(q^4;q^4)_\infty}\right),  \label{key-id-2} \\
&\sum_{k=0}^\infty \frac{q^{k^2+k}(-1;q)_{2k+1}(-q;q^2)_k}{(q^2;q^2)_{2k+1}}=\frac{1}{2}\left(3\frac{(q^4;q^4)_\infty}{(q;q)_\infty}+\frac{(q;q)_\infty^3(q^4;q^4)_\infty}{(q^2;q^2)_\infty^2}\right), \label{key-id-3} \\
&\sum_{k=0}^\infty \frac{q^{k^2+3k+1}(-1;q)_{2k+1}(-q;q^2)_k}{(q^2;q^2)_{2k+1}}=\frac{1}{2}\left(\frac{(q^4;q^4)_\infty}{(q;q)_\infty}-\frac{(q;q)_\infty^3(q^4;q^4)_\infty}{(q^2;q^2)_\infty^2}\right). \label{key-id-4}
\end{align}
\end{theorem}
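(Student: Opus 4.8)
The plan is to reduce each single sum to a theta quotient by rewriting it in the single base $q^2$ and then evaluating the resulting basic hypergeometric series through the Bailey-pair machinery. Denote by $S_1,S_2,S_3,S_4$ the four left-hand sides of \eqref{key-id-1}--\eqref{key-id-4}. The first step is the even--odd factorizations
\[
(-1;q)_{2k}=(-1;q^2)_k(-q;q^2)_k,\qquad (-1;q)_{2k+1}=(-1;q^2)_{k+1}(-q;q^2)_k,
\]
\[
(q^2;q^2)_{2k}=(q^2;q^4)_k(q^4;q^4)_k,\qquad (q^2;q^2)_{2k+1}=(q^2;q^4)_{k+1}(q^4;q^4)_k,
\]
together with $(q^4;q^4)_k=(q^2;q^2)_k(-q^2;q^2)_k$ and $(-1;q^2)_{k+1}=2(-q^2;q^2)_k$, which recast every summand as a ratio of $q^2$-Pochhammer symbols (decorated by the odd factor $(-q;q^2)_k$) and expose the underlying very-well-poised structure.

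Each pair is governed by a single one-parameter family: setting
\[
F(z)=\sum_{k\ge0}\frac{z^{k}q^{k^2}(-1;q)_{2k}(-1;q^2)_k}{(q^2;q^2)_{2k}},\qquad G(z)=\sum_{k\ge0}\frac{z^{k}q^{k^2+k}(-1;q)_{2k+1}(-q;q^2)_k}{(q^2;q^2)_{2k+1}},
\]
we have $S_1=F(1)$, $S_2=F(q^2)$ and $S_3=G(1)$, $S_4=q\,G(q^2)$. Forming the symmetric combination is especially clean, because $(1+q^{2k})(-1;q^2)_k=(-1;q^2)_{k+1}$ and $(1+q^{2k+1})(-q;q^2)_k=(-q;q^2)_{k+1}$ collapse the troublesome factors:
\[
S_1+S_2=2\sum_{k\ge0}\frac{q^{k^2}(-1;q^2)_k(-q;q^2)_k}{(q^2;q^2)_k(q^2;q^4)_k},\qquad S_3+S_4=2\sum_{k\ge0}\frac{q^{k^2+k}(-q;q^2)_k(-q;q^2)_{k+1}}{(q^2;q^2)_k(q^2;q^4)_{k+1}}.
\]
Comparison with \eqref{key-id-1}--\eqref{key-id-4} identifies the targets $S_1+S_2=2\varphi(-q^2)/\varphi(-q)$ and $S_3+S_4=2/\big((q;q^2)_\infty(q^2;q^4)_\infty\big)$, where $\varphi(-q)=(q;q)_\infty^2/(q^2;q^2)_\infty$.

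To evaluate these I would feed an explicit Bailey pair---relative to $a=1$ with base $q^2$ for the first family and relative to $a=q^2$ for the second---into the limiting case of Bailey's lemma; equivalently, one takes the Rogers--Ramanujan limit in Watson's $q$-analogue of Whipple's transformation for the very-well-poised series attached to $F(z)$ and $G(z)$. This collapses $\sum_k q^{k^2}\beta_k$ to a bilateral sum $\sum_n q^{n^2}\alpha_n$ that the Jacobi triple product turns into a product, yielding the closed forms above. Specializing the closed form of $F$ at $z=1$ and $z=q^2$ (and of $G$ analogously) then separates $S_1$ from $S_2$ and $S_3$ from $S_4$; the asymmetric coefficient $3$ in \eqref{key-id-1} and \eqref{key-id-3} is precisely the signature of a $2$-dissection of the underlying theta series by the parity of the summation index, with three residue contributions adding and one subtracting.

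The crux is this last evaluation: constructing a Bailey pair whose $\beta_k$ matches the mixed-base summand $q^{k^2}(-1;q^2)_k(-q;q^2)_k/\big((q^2;q^2)_k(q^2;q^4)_k\big)$, and then carrying out the dissection so that the bilateral theta sum splits into exactly the stated combination of two eta-quotients with coefficient $3$ rather than the symmetric $1,1$. Equivalently, the difficulty is to supply, alongside $S_1+S_2=2\varphi(-q^2)/\varphi(-q)$, the second independent evaluation $3S_2-S_1=2\varphi(-q)\varphi(-q^2)$ (and $S_3-3S_4=2(q;q)_\infty^3(q^4;q^4)_\infty/(q^2;q^2)_\infty^2$), after which the four identities drop out by solving a $2\times2$ linear system. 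As a check on the chosen Bailey pair I would first confirm all four identities to high order in $q$, which the reductions above render straightforward.
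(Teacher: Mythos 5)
There is a genuine gap. Your reductions and bookkeeping are all correct: the factorizations of $(-1;q)_{2k}$ and $(q^2;q^2)_{2k}$, the identifications $S_1=F(1)$, $S_2=F(q^2)$, $S_3=G(1)$, $S_4=qG(q^2)$, and the four target evaluations $S_1+S_2=2\varphi(-q^2)/\varphi(-q)$, $3S_2-S_1=2\varphi(-q)\varphi(-q^2)$, $S_3+S_4=2(q^4;q^4)_\infty/(q;q)_\infty$, $S_3-3S_4=2(q;q)_\infty^3(q^4;q^4)_\infty/(q^2;q^2)_\infty^2$ all match the stated right-hand sides. But everything after that is a promissory note, and the promised step is the entire content of the proof. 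You say you would ``feed an explicit Bailey pair \dots into the limiting case of Bailey's lemma,'' and then concede that ``the crux is \dots constructing a Bailey pair whose $\beta_k$ matches the mixed-base summand.'' That is exactly right, and it is exactly what you have not done. The paper's proof rests on Lemma \ref{lem-new-BP}, which produces four new Bailey pairs \eqref{BP-1}--\eqref{BP-4} (with $\beta_n^{(1)}(1;q)=(-1;q)_n^2/(q;q)_{2n}$, etc.) by an $x\to1$ limiting/differentiation argument applied to Andrews's finite identities \cite[Eqs.\ (4.3), (4.9)]{Andrews2016}; these are then inserted into \eqref{TBL} and \eqref{S2BL}. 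Note that $(-1;q)_{2k}(-1;q^2)_k=(-1;q^2)_k^2(-q;q^2)_k$, so $S_1=\sum_k q^{k^2}(-q;q^2)_k\beta_k^{(1)}(1;q^2)$ on the nose --- your framework is the paper's framework, minus its key lemma. No standard Bailey pair from the Slater-type tables has this $\beta$, so the construction cannot be waved through.

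A second, smaller problem: your explanation of the coefficient $3$ as ``the signature of a $2$-dissection \dots by the parity of the summation index'' is a heuristic, not an argument. In the actual proof the $\alpha$-side sums are of the form $1+\sum_{r\ge1}\bigl((2r+1)q^{r(2r-1)}-(2r-1)q^{r(2r+1)}\bigr)$, whose coefficient of $q^{n(n+1)/2}$ is $\tfrac12\bigl(3-(-1)^n(2n+1)\bigr)$; the $3$ comes from splitting this into $3\psi(q)$ minus Jacobi's identity \eqref{eq-Jacobi-id}, i.e.\ from the linear-in-$r$ coefficients in $\alpha_r$, which is precisely the information carried by the missing Bailey pairs. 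The weight-$3/2$ pieces $\varphi(-q)\varphi(-q^2)$ and $(q;q)_\infty^3(q^4;q^4)_\infty/(q^2;q^2)_\infty^2$ cannot be reached without it. Verifying the identities ``to high order in $q$'' does not close this gap.
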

When we transform the generalized Nahm sums in \eqref{WW-id-1}--\eqref{WW-id-2} and \eqref{thm-id-1}--\eqref{thm-id-2} into Nahm sums (of the form \eqref{eq-Nahm}), we find a further remarkable result.
\begin{theorem}\label{thm-counterexample}
The Nahm sums $f_{A,B_i,1/16}(q)$ ($i=1,2$) are modular for
\begin{align}\label{exam-data-1}
A=\begin{pmatrix} 1 & 0 & 0 & -1/2 \\
0 & 1 & 0 & -1/2 \\
0 & 0 & 1 & -1/2 \\
-1/2 & -1/2 & -1/2 & 1 \end{pmatrix},  \quad B_1=\begin{pmatrix} 0 \\ 1/2 \\ 1/2 \\ -1/2 \end{pmatrix}, \quad B_2=\begin{pmatrix} 0 \\ 1/2 \\ 1/2 \\ 0\end{pmatrix}.
\end{align}
However, the dual Nahm sums $f_{A^\star,B_i^\star,C'}(q)$ ($i=1,2$) are not modular for any $C'$ where
\begin{align}\label{exam-data-2}
A^\star=\begin{pmatrix} 2 & 1 & 1 & 2 \\
1 & 2 & 1 & 2 \\
1 & 1 & 2 & 2 \\
2 & 2 & 2 & 4
\end{pmatrix}, \quad  B_1^\star=\begin{pmatrix}
0 \\ 1/2 \\ 1/2 \\ 0 \end{pmatrix}, \quad B_2^\star=\begin{pmatrix} 1 \\ 3/2 \\ 3/2 \\ 2\end{pmatrix}.
\end{align}
\end{theorem}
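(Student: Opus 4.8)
The plan is to reduce both assertions to the rank-three identities already established. I would transform each of the generalized (rank-three) Nahm sums in \eqref{thm-id-1}--\eqref{thm-id-2} and in \eqref{WW-id-1}--\eqref{WW-id-2} into an ordinary Nahm sum of the form \eqref{eq-Nahm}, producing precisely the rank-four data in \eqref{exam-data-1}--\eqref{exam-data-2}. Once the rank-four sums are written as explicit $q$-series (times at most an elementary modular factor and a power of $q$), modularity and non-modularity can be read off directly, and the clause ``for any $C'$'' follows because changing $C'$ only multiplies the whole series by a fixed power of $q$.

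The central step is a resummation converting a rank-three generalized Nahm sum with symmetrizer $D=\mathrm{diag}(2,2,1)$ into a rank-four ordinary Nahm sum, in which the two base-$q^2$ Pochhammer symbols $(q^2;q^2)_j,(q^2;q^2)_k$ are replaced by base-$q$ symbols. I would do this with Euler's identities, in the forms $\sum_{m\ge 0}q^{\binom{m}{2}}z^m/(q;q)_m=(-z;q)_\infty$ and $\sum_{m\ge0}z^m/(q;q)_m=1/(z;q)_\infty$, together with the factorization $(q^2;q^2)_n=(q;q)_n(-q;q)_n$; introducing one new summation index (and, in the half-integer cases, a parity dissection) turns each rank-three generalized sum into a rank-four ordinary sum whose matrix is exactly \eqref{exam-data-1} for \eqref{thm-id-1}--\eqref{thm-id-2} and exactly \eqref{exam-data-2} for \eqref{WW-id-1}--\eqref{WW-id-2}. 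Carrying out this transformation uniformly for all four sums, and checking that the quadratic form, the linear vector, and the additive constant come out to be the stated data (in particular that the scalar is $1/16$ in the modular cases), is the step I expect to be the main obstacle: the bookkeeping of the linear terms and of the normalizing powers of $q$ is delicate, and the dual matrix \eqref{exam-data-2}, whose diagonal quadratic coefficients are all $\ge 1$, requires a more careful choice of the auxiliary index than the matrix \eqref{exam-data-1}.

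With the transformation in hand, the modularity of $f_{A,B_i,1/16}$ is immediate. By \eqref{thm-id-1} and \eqref{thm-id-2} the two generalized sums equal $3(q^2;q^2)_\infty^3/(q;q)_\infty^3$ and $(q^2;q^2)_\infty^3/(q;q)_\infty^3$; since
\[
\frac{(q^2;q^2)_\infty^3}{(q;q)_\infty^3}=(-q;q)_\infty^3,
\]
the closed form of $f_{A,B_i,1/16}(q)$ is a nonzero constant times a fixed power of $q$ times this single eta quotient, hence a modular form. Thus $(A,B_i,1/16)$ are genuine rank-four modular triples.

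For the dual sums the same transformation, applied now to \eqref{WW-id-1}--\eqref{WW-id-2}, shows that $f_{A^\star,B_i^\star,C'}(q)$ equals a power of $q$ (times an elementary modular factor) times a nonzero linear combination $a\,g_1+b\,g_2$ with $ab\neq 0$, where $g_1=(q^2;q^2)_\infty/(q;q)_\infty=q^{-1/24}\eta(2\tau)/\eta(\tau)$ and $g_2=(q;q)_\infty^3/(q^2;q^2)_\infty=q^{-1/24}\eta(\tau)^3/\eta(2\tau)$. The eta quotients $\eta(2\tau)/\eta(\tau)$ and $\eta(\tau)^3/\eta(2\tau)$ have weights $0$ and $1$, so a combination $a\,g_1+b\,g_2$ with $ab\neq 0$ transforms under the modular group with two incompatible automorphy factors and cannot be a modular form; multiplying by a modular factor and a power of $q$ (that is, varying $C'$) cannot repair this. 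Equivalently, the transformation identifies $f_{A^\star,B_i^\star,C'}$, as $C'$ ranges over $\mathbb{Q}$, with the generalized sums $\widetilde f_{A,B_i,C,D}$, which are non-modular for every $C$ by \cite{WW24-2}. Hence $f_{A^\star,B_i^\star,C'}(q)$ is non-modular for every $C'$, and since $(A^\star,B_i^\star,C^\star)$ is the Zagier dual of the modular triple $(A,B_i,1/16)$, this yields the desired counterexamples to Conjecture~\ref{conj-dual}.
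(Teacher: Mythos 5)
Your overall strategy coincides with the paper's: convert the rank-three generalized Nahm sums of \eqref{thm-id-1}--\eqref{thm-id-2} and \eqref{WW-id-1}--\eqref{WW-id-2} into rank-four ordinary Nahm sums realizing the data \eqref{exam-data-1}--\eqref{exam-data-2}, then read off modularity from the single eta quotient $(q^2;q^2)_\infty^3/(q;q)_\infty^3$ and non-modularity from the fact that the dual sums mix the weight-$0$ and weight-$1$ quotients $\eta(2\tau)/\eta(\tau)$ and $\eta(\tau)^3/\eta(2\tau)$ with both coefficients nonzero (the clause ``for any $C'$'' then being immediate). That endgame is exactly what the paper does.

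However, the mechanism you propose for the central transformation would not work, and this is a genuine gap rather than mere bookkeeping. You plan to turn the two base-$q^2$ Pochhammer symbols $(q^2;q^2)_j,(q^2;q^2)_k$ into base-$q$ symbols via $(q^2;q^2)_n=(q;q)_n(-q;q)_n$ and Euler's identities with one auxiliary index. The leftover factors $(-q;q)_n$ are not Nahm denominators, and dissecting each of the two base-$q^2$ indices would produce rank five, not four; no choice of a single auxiliary index repairs this. The paper goes in the opposite direction: it keeps everything in base $q^2$ and splits the single base-$q$ index using the identity \eqref{id-transform},
\begin{align*}
\frac{q^{n(n-1)/2}}{(q;q)_n}=\sum_{i+j=n} \frac{q^{i^2+j^2-i}}{(q^2;q^2)_i(q^2;q^2)_j},
\end{align*}
so that $F(q^{b_1},q^{b_2},q^{b_3})$ becomes $f_{A,B,0}(q^2)$ with $A$ exactly as in \eqref{exam-data-1} and $B=(\tfrac12 b_1-\tfrac14,\tfrac12 b_1+\tfrac14,\tfrac12 b_2,\tfrac12 b_3)^{\mathrm T}$ (see \eqref{122-f}); the identical substitution applied to the sums in \eqref{WW-id-1}--\eqref{WW-id-2} yields $f_{A^\star,\widetilde B,0}(q^2)$ with $A^\star=A^{-1}$ as in \eqref{exam-data-2} (see \eqref{M-f}), so no ``more careful choice of auxiliary index'' is needed there either. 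Without this identity, or an equivalent quadratic dissection of $1/(q;q)_n$ into two base-$q^2$ indices, your reduction to rank four does not go through; with it, the rest of your argument is correct and matches the paper.
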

Theorem \ref{thm-counterexample} provides counterexamples to Conjecture \ref{conj-dual}.  Nevertheless, as can be seen from here and previous works such as \cite{WW23,MW24}, there do exist some duality phenomenon between Nahm sums. Conjectures \ref{conj-dual} and \ref{conj-Mizuno} can help us to construct new possible modular triples and quadruples, respectively. Moreover, the dual Nahm sums in Theorem \ref{thm-counterexample} can be expressed as sums of two modular forms of different weights (see \eqref{dual-exp-1} and \eqref{dual-exp-2}), and hence are still within the framework of modular forms.  The correct formulation of these conjectures is left as an interesting open problem.

The rest of this paper is organized as follows. In Section \ref{sec-pre} we briefly review some basic $q$-series identities and the theory of Bailey pairs. We also establish four Bailey pairs which will be invoked in our proof of Theorem \ref{thm-new-RR}, and we present some single-sum Rogers--Ramanujan type identities as byproducts. We give proofs of Theorems \ref{thm-id}--\ref{thm-counterexample} in Section \ref{sec-proof}.

\section{Preliminaries}\label{sec-pre}
We need one of Euler's $q$-exponential identities  \cite[Corollary 2.2]{Andrews}:
\begin{align}
%\sum_{n=0}^{\infty}\frac{z^n}{(q;q)_n}
%&=
%\frac{1}{(z;q)_{\infty}}, \quad|z|<1, \label{Euler-1} \\
\sum_{n=0}^{\infty}\frac{q^{(^n_2)} z^n}{(q;q)_n}
&=
(-z;q)_{\infty}. \label{Euler-2}
\end{align}

Recall one of Ramanujan's theta functions (see \cite[Eq.\ (2.2.13)]{Andrews} or \cite[Eq.\ (1.3.14)]{Berndt-book}):
\begin{align}
%\varphi(q)&:=\sum_{n=-\infty}^\infty q^{n^2}=(-q;q^2)_\infty (q^2;q^2)_\infty, \label{phi-defn} \\
    \psi(q)&:=\sum_{n=0}^\infty q^{n(n+1)/2}=\frac{(q^2;q^2)_\infty^2}{(q;q)_\infty}. \label{psi-defn}
\end{align}
Here the last equality follows from the Jacobi triple product identity \cite[Theorem 2.8]{Andrews}
\begin{align}\label{JTP}
(q,z,q/z;q)_\infty=\sum_{n=-\infty}^\infty (-1)^nq^{\binom{n}{2}}z^n.
\end{align}
As an important consequence of \eqref{JTP}, we have Jacobi's identity \cite[Theorem 1.3.9]{Berndt-book}:
\begin{align}\label{eq-Jacobi-id}
    (q;q)_\infty^3=\sum_{n=0}^\infty (-1)^n(2n+1)q^{n(n+1)/2}.
\end{align}

A pair of sequences $(\alpha_n(a;q),\beta_n(a;q))$ is called a Bailey pair relative to $a$ if for all $n\geq 0$,
 \begin{align}\label{defn-BP}
     \beta_n(a;q)=\sum_{k=0}^n\frac{\alpha_k(a;q)}{(q;q)_{n-k}(aq;q)_{n+k}}.
 \end{align}
Bailey's lemma (see e.g. \cite[p.\ 4]{MSZ2008}) implies some transformation formulas which are quite useful in proving Rogers--Ramanujan type identities. Here we recall two of such formulas (see e.g. \cite[(TBL), (S2BL)]{MSZ2008}). Suppose that $(\alpha_n(a;q), \beta_n(a;q))$ is a Bailey pair relative to $a \in \{1,q\}$, then
\begin{align}
\sum_{n=0}^\infty q^{n^2}(-q;q^2)_n\beta_n(1;q^2)&=\frac{1}{\psi(-q)}\sum_{r=0}^\infty q^{r^2}\alpha_r(1;q^2), \label{TBL} \\
\sum_{n=0}^\infty q^{n(n+1)/2}(-q;q)_n\beta_n(q;q)&= \frac{1-q}{\varphi(-q)}\sum_{r=0}^\infty q^{r(r+1)/2}\alpha_r(q;q). \label{S2BL}
\end{align}

Along the proof of Theorem \ref{thm-id}, we find four Bailey pairs which appear to be new.
\begin{lemma}\label{lem-new-BP}
Let $\alpha_0^{(i)}(a;q)=\beta_0^{(i)}(a;q)=1$ ($i=1,2$, $a=1,q$) and for $n\geq 1$ we define
\begin{align}
&\alpha_n^{(1)}(1;q):=(2n+1)q^{n(n-1)/2}-(2n-1)q^{n(n+1)/2}, \beta_n^{(1)}(1;q):=\frac{(-1;q)_n^2}{(q;q)_{2n}}; \label{BP-1} \\
&\alpha_n^{(2)}(1;q):=(2n+1)q^{n(n+1)/2}-(2n-1)q^{n(n-1)/2}, \beta_n^{(2)}(1;q):=\frac{q^n(-1;q)_n^2}{(q;q)_{2n}}; \label{BP-2} \\
&\alpha_n^{(1)}(q;q):=(n+1)q^{n^2/2}-nq^{(n+1)^2/2},  \beta_n^{(1)}(q;q):=\frac{(-q^{1/2};q)_n^2}{(q^2;q)_{2n}}; \label{BP-3} \\
&\alpha_n^{(2)}(q;q):=(n+1)q^{(n+1)^2/2}-nq^{n^2/2},  \beta_n^{(2)}(q;q):=\frac{q^{n+\frac{1}{2}}(-q^{1/2};q)_n^2}{(q^2;q)_{2n}}. \label{BP-4}
\end{align}
Then the above $(\alpha_n^{(i)}(a;q);\beta_n^{(i)}(a;q))$ are Bailey pairs relative to $a$.
\end{lemma}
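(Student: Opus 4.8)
The plan is to verify the defining relation \eqref{defn-BP} for each of the four pairs. Since for each fixed $n$ the relation expresses $\beta_n$ as a linear combination of $\alpha_0,\dots,\alpha_n$ with the coefficient of $\alpha_n$ equal to $1/(aq;q)_{2n}\neq 0$, the system is lower-triangular and invertible; hence it is equivalent, and computationally more convenient, to apply the standard Bailey pair inversion
$$\alpha_n(a;q)=(1-aq^{2n})\sum_{j=0}^n\frac{(-1)^{n-j}q^{\binom{n-j}{2}}(aq;q)_{n+j-1}}{(q;q)_{n-j}}\,\beta_j(a;q)$$
and to check that inserting the stated $\beta_j$ reproduces the stated $\alpha_n$. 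Because \eqref{defn-BP} is linear in $(\alpha,\beta)$, it is enough to treat independent combinations. Here I would exploit the evident structure of the four pairs: one has $\beta_n^{(2)}(1;q)=q^n\beta_n^{(1)}(1;q)$ and $\beta_n^{(2)}(q;q)=q^{n+1/2}\beta_n^{(1)}(q;q)$, and the $a=q$ pairs \eqref{BP-3}--\eqref{BP-4} are the natural analogues of the $a=1$ pairs \eqref{BP-1}--\eqref{BP-2} with $(q;q)_{n+k}$ replaced by $(q^2;q)_{n+k}$ and $(-1;q)_n$ by $(-q^{1/2};q)_n$. This suggests settling the $a=1$ cases first and recycling the computation.

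For a fixed pair, say \eqref{BP-1}, I would substitute $\beta_j=(-1;q)_j^2/(q;q)_{2j}$ into the inversion sum. Writing $(-1;q)_j=2(-q;q)_{j-1}$ for $j\ge1$ and $(q;q)_{2j}=(q;q)_j(q^{j+1};q)_j$, the sum turns into a terminating basic hypergeometric series in $q^j$, with the overall factor $(1-q^{2n})$ and a balancing arising from $(q;q)_{n+j-1}/(q;q)_{n-j}$; one then evaluates it in closed form and matches it against $(2n+1)q^{\binom{n}{2}}-(2n-1)q^{\binom{n+1}{2}}$. The presence of the linear coefficients $2n\pm1$ is the signal that the evaluation is \emph{not} a single application of a classical summation such as $q$-Chu--Vandermonde or the $q$-Gauss formula \eqref{eq-Gauss}; rather, such factors arise by differentiating a theta-type identity in a parameter, or, more usefully, from the fact that $\alpha_n$ is itself a difference of two consecutive theta-like terms, so that the whole identity should telescope.

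The main obstacle is exactly this closed-form evaluation of the terminating sum. I expect the most robust route is to argue by recurrence rather than to guess a single summation theorem: let $R_n(a)$ denote the right-hand side of \eqref{defn-BP} with the stated $\alpha_k$ inserted, check $R_0=1=\beta_0$, and show that $R_n$ and $\beta_n$ satisfy the same $q$-difference relation in $n$ (the ratio $\beta_n/\beta_{n-1}$ being the explicit rational function $(1+q^{n-1})^2/\bigl((1-q^{2n-1})(1-q^{2n})\bigr)$ of $q^n$). Since the summand is a sum of two proper $q$-hypergeometric terms in $(n,k)$, a creative-telescoping (q-Zeilberger) certificate produces the needed recurrence for $R_n$; this is the one genuinely computational step. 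Once the recurrence is in place, induction on $n$ finishes all four pairs, the $a=q$ cases \eqref{BP-3}--\eqref{BP-4} following from the same certificate after the substitutions noted above together with a careful bookkeeping of the half-integral exponents $n^2/2$ and $(n+1)^2/2$.
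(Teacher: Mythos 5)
Your plan correctly isolates where the difficulty lies (the linear factors $2n\pm 1$ in $\alpha_n$ rule out a one-shot classical summation), but the proof is not actually carried out: the ``one genuinely computational step'' --- producing a creative-telescoping certificate for $R_n=\sum_{k=0}^n\alpha_k/\bigl((q;q)_{n-k}(aq;q)_{n+k}\bigr)$ --- is asserted rather than exhibited, and that step is precisely the content of the lemma. There is also a technical soft spot in invoking the $q$-Zeilberger algorithm here: the summand carries the factors $2k\pm 1$ (and, in the $a=q$ cases, $k$ and $k+1$), which are rational in $k$ but not in $q^k$, so the term is not proper $q$-hypergeometric in the standard sense; one needs the mixed polynomial/$q$-hypergeometric extension or holonomic closure properties, and the recurrence so produced need not be first order, so matching the single ratio $\beta_n/\beta_{n-1}$ may not suffice without further initial-value checks. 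Finally, the claim that the $a=q$ pairs \eqref{BP-3}--\eqref{BP-4} ``follow from the same certificate after the substitutions'' is optimistic: they are relative to a different $a$, with $(aq;q)_{n+k}=(q^2;q)_{n+k}$ and half-integral exponents, so at best one repeats the computation rather than recycles it.

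It is worth noting that your passing remark --- that the coefficients $2n\pm1$ should arise ``by differentiating a theta-type identity in a parameter'' --- is essentially the paper's entire proof. The paper starts from Andrews's $x$-parametrized finite identities (his Eqs.\ (4.3) and (4.9)), which are Bailey-pair relations whose $\alpha_r$ has the form $q^{\binom{r+1}{2}}x^r+q^{\binom{r}{2}}x^{-r}$, forms the combination $\bigl(f(x)-x\,f(1/x)\bigr)/(1-x)$ (respectively divides by $1-x^2$ for the $a=q$ cases), and lets $x\to1$; the difference quotients such as $(x^{-r}-x^{r+1})/(1-x)\to 2r+1$ produce the linear coefficients with no summation theorem, no inversion, and no telescoping certificate needed. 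If you want to complete your argument, the cleanest route is to adopt that limiting device applied to a known $x$-parametrized Bailey pair rather than to hunt for a Zeilberger certificate.
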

\begin{proof}
(1) We rewrite the identity \cite[Eq.\ (4.3)]{Andrews2016} in the following equivalent form:
\begin{align}
\frac{(-x^{-1};q)_n(-xq;q)_n}{(q;q)_{2n}}=\frac{1}{(q;q)_n^2}+\sum_{r=1}^n\frac{q^{\binom{r+1}{2}}x^r+q^{\binom{r}{2}}x^{-r}}{(q;q)_{n-r}(q;q)_{n+r}}.
\label{even-finite}
\end{align}
Replacing $x$ by $x^{-1}$ and multiplying both sides by $x$, we obtain
\begin{align}
x\frac{(-x;q)_n(-x^{-1}q;q)_n}{(q;q)_{2n}}=\frac{x}{(q;q)_n^2}+\sum_{r=1}^n\frac{q^{\binom{r+1}{2}}x^{1-r}+q^{\binom{r}{2}}x^{r+1}}{(q;q)_{n-r}(q;q)_{n+r}}.
\label{even-finite-change}
\end{align}
Subtracting \eqref{even-finite-change} from \eqref{even-finite} and then dividing both sides by $(1-x)$, we deduce that
\begin{align}
&\frac{(-x;q)_n(-x^{-1};q)_n}{(q;q)_{2n}}\left(\frac{1+xq^n}{1+x}-x\frac{1+x^{-1}q^n}{1+x^{-1}} \right)\times \frac{1}{1-x} \\
&=\frac{1}{(q;q)_n^2}+\sum_{r=1}^n\frac{1}{(q;q)_{n-r}(q;q)_{n+r}} \times \left( q^{\binom{r+1}{2}} \frac{x^r-x^{1-r}}{1-x} +q^{\binom{r}{2}}\frac{x^{-r}-x^{r+1}}{1-x}\right). \nonumber
\end{align}
Simplifying and then letting $x\rightarrow 1$, we obtain
\begin{align}
\frac{(-1;q)_n^2}{(q;q)_{2n}}=\frac{1}{(q;q)_n^2}+\sum_{r=1}^n \frac{(2r+1)q^{\binom{r}{2}}-(2r-1)q^{\binom{r+1}{2}}}{(q;q)_{n-r}(q;q)_{n+r}}.
\end{align}
This confirms the Bailey pair in \eqref{BP-1}.

(2) Replacing $x$ by $x^{-1}$ in \eqref{even-finite} and then multiplying both sides by $x^{-1}$, we obtain
\begin{align}
x^{-1}\frac{(-x;q)_n(-x^{-1}q;q)_n}{(q;q)_{2n}}=\frac{x^{-1}}{(q;q)_n^2}+\sum_{r=1}^n\frac{q^{\binom{r+1}{2}}x^{-r-1}+q^{\binom{r}{2}}x^{r-1}}{(q;q)_{n-r}(q;q)_{n+r}}.
\label{even-finite-new}
\end{align}
Subtracting \eqref{even-finite-new} from \eqref{even-finite}, then dividing both sides by $(1-x)$ and letting $x\rightarrow 1$, we obtain
\begin{align}
\frac{q^n(-1;q)_n^2}{(q;q)_{2n}}=\frac{1}{(q;q)_n^2}+\sum_{r=1}^n \frac{(2r+1)q^{\binom{r+1}{2}}-(2r-1)q^{\binom{r}{2}}}{(q;q)_{n-r}(q;q)_{n+r}}.
\end{align}
This confirms the Bailey pair in \eqref{BP-2}.

(3) We rewrite the identity \cite[Eq.\ (4.9)]{Andrews2016} in the following equivalent form:
\begin{align}
\frac{(-x^{-1}q^{1/2};q)_n(-xq^{1/2};q)_{n+1}}{(q^2;q)_{2n}}=\sum_{r=0}^n \frac{q^{r^2/2}x^{-r}+q^{(r+1)^2/2}x^{r+1}}{(q;q)_{n-r}(q^2;q)_{n+r}}. \label{odd-finite}
\end{align}
Replacing $x$ by $x^{-1}$ and then multiplying both sides by $x^2$, we obtain
\begin{align}
x^2\frac{(-xq^{1/2};q)_n(-x^{-1}q^{1/2};q)_{n+1}}{(q^2;q)_{2n}}=\sum_{r=0}^n \frac{q^{r^2/2}x^{r+2}+q^{(r+1)^2/2}x^{1-r}}{(q;q)_{n-r}(q^2;q)_{n+r}}. \label{odd-finite-change}
\end{align}
Subtracting \eqref{odd-finite-change} from \eqref{odd-finite} and then dividing both sides by $(1-x^2)$, we deduce that
\begin{align}
&\frac{(-x^{-1}q^{1/2};q)_n(-xq^{1/2};q)_n}{(q^2;q)_{2n}}\times \frac{(1+xq^{n+\frac{1}{2}})-x^2(1+x^{-1}q^{n+\frac{1}{2}})}{1-x^2}   \\
&=\sum_{r=0}^n \frac{1}{(q;q)_{n-r}(q^2;q)_{n+r}} \times \left(q^{r^2/2}\frac{x^{-r}-x^{r+2}}{1-x^2}+ q^{(r+1)^2/2} \frac{x^{r+1}-x^{1-r}}{1-x^2}\right). \nonumber
\end{align}
Simplifying and then letting $x\rightarrow 1$, we confirm the Bailey pair in \eqref{BP-3}.

(4) Replacing $x$ by $x^{-1}$ in \eqref{odd-finite}, then subtracting the resulting identity from \eqref{odd-finite}, dividing both sides by $(1-x^2)$ and letting $x\rightarrow 1$, we obtain
\begin{align}
q^{n+\frac{1}{2}}\frac{(-q^{1/2};q)_n^2}{(q^2;q)_{2n}}=\sum_{r=0}^n \frac{(r+1)q^{(r+1)^2/2}-rq^{r^2/2}}{(q;q)_{n-r}(q^2;q)_{n+r}}.
\end{align}
This confirms the Bailey pair in \eqref{BP-4}.
\end{proof}

If we plug the Bailey pairs \eqref{BP-1}--\eqref{BP-4} into some transformation formulas, we will get various identities as consequences. For example, if we substitute them into the formula (see e.g. \cite[Eq.\ (1.2.8)]{MSZ2008}):
\begin{align}\label{BP-tran-128}
\sum_{n=0}^\infty a^nq^{n^2}\beta_n(a;q)=\frac{1}{(aq;q)_\infty} \sum_{r=0}^\infty a^rq^{r^2} \alpha_r(a;q),
\end{align}
we obtain the following interesting identities though they are not needed in proof of the theorems stated before. For convenience we denote $J_m:=(q^m;q^m)_\infty$.
\begin{corollary}\label{cor-RR}
We have
\begin{align}
\sum_{n=0}^\infty \frac{q^{n^2}(-1;q)_n^2}{(q;q)_{2n}}&=\frac{4}{3}\frac{J_2J_3^2}{J_1^2J_6}-\frac{1}{3}\frac{J_1^4}{J_2^2}, \label{cor-id-1} \\
\sum_{n=0}^\infty \frac{q^{n^2+n}(-1;q)_n^2}{(q;q)_{2n}}&=\frac{1}{3}\frac{J_1^4}{J_2^2}+\frac{2}{3}\frac{J_2J_3^2}{J_1^2J_6}, \label{cor-id-2} \\
\sum_{n=0}^\infty \frac{q^{2n^2+2n}(-q;q^2)_n^2}{(q^2;q^2)_{2n+1}}
&=\frac{1}{3}\frac{J_1^2J_4^2}{J_2^2}+\frac{2}{3}\frac{J_2J_3J_{12}}{J_1J_4J_6},  \label{cor-id-3} \\
\sum_{n=0}^\infty \frac{q^{2n^2+4n}(-q;q^2)_n^2}{(q^2;q^2)_{2n+1}}&=-\frac{1}{3}q^{-1}\frac{J_1^2J_4^2}{J_2^2}+\frac{1}{3}q^{-1}\frac{J_2J_3J_{12}}{J_1J_4J_6}.  \label{cor-id-4}
\end{align}
\end{corollary}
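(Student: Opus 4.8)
The plan is to feed the four Bailey pairs of Lemma \ref{lem-new-BP} into the transformation \eqref{BP-tran-128}. For \eqref{BP-1}--\eqref{BP-2} (relative to $a=1$) the substitution is direct, and the left-hand side of \eqref{BP-tran-128} reproduces verbatim the series on the left of \eqref{cor-id-1}--\eqref{cor-id-2}. For \eqref{BP-3}--\eqref{BP-4} (relative to $a=q$) I would first apply \eqref{BP-tran-128} with $a=q$ and then replace $q$ by $q^2$; using $(q^4;q^2)_{2n}=(q^2;q^2)_{2n+1}/(1-q^2)$ to rewrite the denominators, the factor $1-q^2$ on the left cancels the same factor arising from $1/(q^4;q^2)_\infty=(1-q^2)/J_2$ on the right, so that the left-hand side becomes exactly the series in \eqref{cor-id-3}--\eqref{cor-id-4}. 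Thus in every case the whole problem reduces to evaluating the $\alpha$-side $\frac{1}{(aq;q)_\infty}\sum_{r\ge0}a^rq^{r^2}\alpha_r(a;q)$ in product form.

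The key structural step is that each $\alpha$-sum collapses to a single bilateral theta series carrying a linear weight. Inserting the explicit $\alpha_r$, simplifying the exponents to generalized pentagonal quadratics, and then reindexing the two halves by $s=r$ and $s=-r$, gives
\begin{align}
\sum_{r\ge0}q^{r^2}\alpha_r^{(1)}(1;q)=\sum_{s\in\mathbb{Z}}(1-2s)q^{(3s^2+s)/2}, \qquad \sum_{r\ge0}q^{r^2}\alpha_r^{(2)}(1;q)=\sum_{s\in\mathbb{Z}}(1+2s)q^{(3s^2+s)/2}, \nonumber
\end{align}
and, after $q\mapsto q^2$, the analogous reductions for \eqref{BP-3}--\eqref{BP-4} produce $\sum_{s\in\mathbb{Z}}(s+1)q^{3s^2+2s}$ and its companion. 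Writing each as $\sum_s q^{Q(s)}\pm\lambda\sum_s s\,q^{Q(s)}$ separates an unsigned theta function from a linearly weighted one; the opposite signs in the two members of each pair are precisely what will flip the sign of one product term between \eqref{cor-id-1} and \eqref{cor-id-2} (and between \eqref{cor-id-3} and \eqref{cor-id-4}).

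The unsigned theta functions are handled routinely by the Jacobi triple product \eqref{JTP}: after reducing the resulting infinite products to $J_m$-quotients one finds $\sum_s q^{(3s^2+s)/2}=J_2J_3^2/(J_1J_6)$ and $\sum_s q^{3s^2+2s}=J_2^2J_3J_{12}/(J_1J_4J_6)$. The step I expect to be the main obstacle is the evaluation of the linearly weighted (weight-$3/2$) theta series $\sum_s s\,q^{(3s^2+s)/2}$ and $\sum_s s\,q^{3s^2+2s}$. Concretely one must prove the eta-quotient evaluations
\begin{align}
\sum_{s\in\mathbb{Z}}(6s+1)q^{(3s^2+s)/2}=\frac{J_1^5}{J_2^2}, \qquad \sum_{s\in\mathbb{Z}}(3s+1)q^{3s^2+2s}=\frac{J_1^2J_4^2}{J_2}. \nonumber
\end{align}
I would derive these from Jacobi's identity \eqref{eq-Jacobi-id}, either by differentiating a two-variable Jacobi-triple-product (or quintuple-product) identity with respect to its theta variable and then specializing, or by recognizing the left-hand sides as the classical weight-$3/2$ unary theta series $\sum_{n\equiv 1\,(6)}n\,q^{n^2/24}$ and $\sum_{n\equiv 1\,(3)}n\,q^{n^2/3}$ and matching them to the indicated eta quotients.

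Finally I would assemble the two pieces with the prefactor $1/(aq;q)_\infty$. For example, combining $\sum_s(1-2s)q^{(3s^2+s)/2}=\frac43\,\frac{J_2J_3^2}{J_1J_6}-\frac13\,\frac{J_1^5}{J_2^2}$ with the factor $1/J_1$ yields the right-hand side $\frac43\,\frac{J_2J_3^2}{J_1^2J_6}-\frac13\,\frac{J_1^4}{J_2^2}$ of \eqref{cor-id-1}; the remaining three identities follow by the same bookkeeping, the sign of the weighted-theta contribution accounting for the differences among \eqref{cor-id-1}--\eqref{cor-id-4}.
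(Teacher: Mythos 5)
Your proposal is correct and follows essentially the same route as the paper: substitute the four Bailey pairs into \eqref{BP-tran-128}, collapse each $\alpha$-sum to a bilateral theta series with a linear weight, and split it into an unweighted piece (evaluated by \eqref{JTP}) plus a weighted piece. The two weight-$3/2$ evaluations you single out as the main obstacle, $\sum_{s}(6s+1)q^{(3s^2+s)/2}=J_1^5/J_2^2$ and $\sum_{s}(3s+1)q^{3s^2+2s}=J_1^2J_4^2/J_2$, are exactly the identities the paper simply quotes from Berndt (Corollaries 1.3.21 and 1.3.22, stated as \eqref{Jacobi-cor-1}--\eqref{Jacobi-cor-2}; note your coefficient $3s+1$ is the correct one, the displayed $3n+2$ in \eqref{Jacobi-cor-2} being a typo), so no new derivation is needed there.
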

\begin{proof}
We first recall two identities (see e.g. \cite[Corollaries 1.3.21 and 1.3.22]{Berndt-book}):
\begin{align}
    \sum_{n=-\infty}^\infty (6n+1)q^{(3n^2+n)/2}&=\frac{(q;q)_\infty^5}{(q^2;q^2)_\infty^2}, \label{Jacobi-cor-1} \\
    \sum_{n=-\infty^\infty} (3n+1)q^{3n^2+2n}&=\frac{(q;q)_\infty^2(q^4;q^4)_\infty^2}{(q^2;q^2)_\infty}. \label{Jacobi-cor-2}
\end{align}
Substituting \eqref{BP-1} into \eqref{BP-tran-128}, we deduce that
\begin{align}
&\sum_{n=0}^\infty \frac{q^{n^2}(-1;q)_n^2}{(q;q)_{2n}}=\sum_{n=0}^\infty q^{n^2}\beta_n^{(1)}(1;q)\nonumber \\
&=\frac{1}{(q;q)_\infty} \Big(1+\sum_{r=1}^\infty \big((2r+1)q^{(3r^2-r)/2}-(2r-1)q^{(3r^2+r)/2}\big)\Big) \nonumber \\
&=-\frac{1}{(q;q)_\infty} \sum_{r=-\infty}^\infty (2r-1)q^{(3r^2+r)/2} \nonumber \\
&=\frac{1}{(q;q)_\infty}\times \frac{1}{3}\Big(4\sum_{r=-\infty}^\infty q^{(3r^2+r)/2}-(6r+1)q^{(3r^2+r)/2}   \Big) \nonumber \\
&=\frac{4}{3}\frac{J_2J_3^2}{J_1^2J_6}-\frac{1}{3}\frac{J_1^4}{J_2^2}.\label{lem-S1-start}
\end{align}
Here for the last equality we used \eqref{Jacobi-cor-1} and \eqref{JTP}. This proves \eqref{cor-id-1}.

Similarly, substituting \eqref{BP-2} into \eqref{BP-tran-128}, we have
\begin{align}
&\sum_{n=0}^\infty \frac{q^{n^2+n}(-1;q)_n^2}{(q;q)_{2n}}=\sum_{n=0}^\infty q^{n^2}\beta_n^{(2)}(1;q) \nonumber \\
&=\frac{1}{(q;q)_\infty} \Big(1+\sum_{r=1}^\infty (2r+1)q^{(3r^2+r)/2}-\sum_{r=1}^\infty (2r-1)q^{(3r^2-r)/2}  \Big) \nonumber \\
&=\frac{1}{(q;q)_\infty} \sum_{r=-\infty}^\infty (2r+1)q^{(3r^2+r)/2} \nonumber \\
&=\frac{1}{(q;q)_\infty}\times \frac{1}{3}\Big(\sum_{r=-\infty}^\infty (6r+1)q^{(3r^2+r)/2}+2\sum_{r=-\infty}^\infty q^{(3r^2+r)/2}\Big) \nonumber \\
&=\frac{1}{3}\frac{J_1^4}{J_2^2}+\frac{2}{3}\frac{J_2J_3^2}{J_1^2J_6}.
\end{align}
Here for the last equality we used \eqref{Jacobi-cor-1} and \eqref{JTP}. This proves \eqref{cor-id-2}.

Next, substituting \eqref{BP-3} into \eqref{BP-tran-128}, we have
\begin{align}
&\sum_{n=0}^\infty \frac{q^{2n^2+2n}(-q;q^2)_n^2}{(q^2;q^2)_{2n+1}}=\frac{1}{1-q^2}\times \sum_{n=0}^\infty q^{2n^2+2n}\beta_n^{(1)}(q^2;q^2) \nonumber \\
&=\frac{1}{(q^2;q^2)_\infty} \Big(1+\sum_{r=1}^\infty \big( (r+1)q^{3r^2+2r}-rq^{(r+1)(3r+1)}\big)\Big) \nonumber \\
&=\frac{1}{(q^2;q^2)_\infty} \sum_{r=-\infty}^\infty (r+1)q^{3r^2+2r} \nonumber \\
&=\frac{1}{3(q^2;q^2)_\infty} \times \Big(\sum_{r=-\infty}^\infty (3r+1)q^{3r^2+2r}+2\sum_{r=-\infty}^\infty q^{3r^2+2r}  \Big) \nonumber \\
&=\frac{1}{3}\frac{J_1^2J_4^2}{J_2^2}+\frac{2}{3}\frac{J_2J_3J_{12}}{J_1J_4J_6}.
\end{align}
Here for the last equality we used \eqref{Jacobi-cor-2} and \eqref{JTP}. This proves \eqref{cor-id-3}.

Similarly, substituting \eqref{BP-4} into \eqref{BP-tran-128}, we have
\begin{align}
&\sum_{n=0}^\infty \frac{q^{2n^2+4n}(-q;q^2)_n^2}{(q^2;q^2)_{2n+1}}=\frac{q^{-1}}{1-q^2}\times \sum_{n=0}^\infty q^{2n^2+4n}\beta_n^{(2)}(q^2;q^2) \nonumber \\
&=\frac{q^{-1}}{(q^2;q^2)_\infty} \Big(1+\sum_{r=1}^\infty \big( (r+1)q^{(r+1)(3r+1)}-rq^{3r^2+2r}\big)\Big) \nonumber \\
&=-\frac{q^{-1}}{(q^2;q^2)_\infty} \sum_{r=-\infty}^\infty rq^{3r^2+2r} \nonumber \\
&=-\frac{q^{-1}}{3(q^2;q^2)_\infty} \times \Big(\sum_{r=-\infty}^\infty (3r+1)q^{3r^2+2r}-\sum_{r=-\infty}^\infty q^{3r^2+2r}  \Big) \nonumber \\
&=-\frac{1}{3}q^{-1}\frac{J_1^2J_4^2}{J_2^2}+\frac{1}{3}q^{-1}\frac{J_2J_3J_{12}}{J_1J_4J_6}.
\end{align}
Here for the last equality we used \eqref{Jacobi-cor-2} and \eqref{JTP}. This proves \eqref{cor-id-4}.
\end{proof}

\section{Proofs of the Theorems}\label{sec-proof}

\begin{proof}[Proof of Theorem \ref{thm-new-RR}]
Substituting the Bailey pair \eqref{BP-1} into \eqref{TBL}, we deduce that
\begin{align}
&\sum_{n=0}^\infty \frac{q^{n^2}(-1;q)_{2n}(-1;q^2)_n}{(q^2;q^2)_{2n}} =\sum_{n=0}^\infty q^{n^2}(-q;q^2)_n\beta_n^{(1)}(1;q^2) \nonumber \\
&=\frac{1}{\psi(-q)}\sum_{r=0}^\infty q^{r^2}\alpha_r^{(1)}(1;q^2) \nonumber \\
&=\frac{1}{\psi(-q)}\Big(1+\sum_{r=1}^\infty (2r+1)q^{2r^2-r}-\sum_{r=1}^\infty (2r-1)q^{2r^2+r} \Big) \nonumber \\
&=\frac{1}{2}\times \frac{(q^2;q^2)_\infty}{(q;q)_\infty(q^4;q^4)_\infty} \sum_{n=0}^\infty \big(3-(-1)^n(2n+1)\big)q^{n(n+1)/2}.
\end{align}
Utilizing the identity \eqref{psi-defn} and \eqref{eq-Jacobi-id}, we obtain \eqref{key-id-1}.

Similarly, substituting the Bailey pair \eqref{BP-2} into \eqref{TBL}, we obtain \eqref{key-id-2}.

Substituting the Bailey pair \eqref{BP-3} into \eqref{S2BL}, we deduce that
\begin{align}
&\sum_{n=0}^\infty \frac{q^{n^2+n}(-1;q)_{2n+1}(-q;q^2)_n}{(q^2;q^2)_{2n+1}} =\frac{2}{1-q^2}\times \sum_{n=0}^\infty q^{n^2+n}(-q^2;q^2)_n \beta_n^{(1)}(q^2;q^2) \nonumber \\
&=\frac{2}{\varphi(-q^2)} \sum_{r=0}^\infty q^{r(r+1)} \alpha_r^{(1)}(q^2;q^2) \nonumber \\
&=2\frac{(q^4;q^4)_\infty}{(q^2;q^2)_\infty^2} \sum_{r=0}^\infty \Big((r+1)q^{2r^2+r}-rq^{(r+1)(2r+1)}\Big) \nonumber \\
&=\frac{1}{2}\times \frac{(q^4;q^4)_\infty}{(q^2;q^2)_\infty^2} \sum_{n=0}^\infty \Big((-1)^n(2n+1)+3 \Big)q^{n(n+1)/2}.
\end{align}
Utilizing the identity \eqref{psi-defn} and \eqref{eq-Jacobi-id}, we obtain \eqref{key-id-3}.

Similarly, substituting the Bailey pair \eqref{BP-4} into \eqref{S2BL}, we obtain \eqref{key-id-4}.
\end{proof}

\begin{proof}[Proof of Theorem \ref{thm-id}]
We define
\begin{align}\label{F-defn}
F(u,v,w)=F(u,v,w;q):=\sum_{i,j,k\geq 0} \frac{q^{\frac{1}{2}i^2+j^2+k^2-ik-jk}u^iv^jw^k}{(q;q)_i(q^2;q^2)_j(q^2;q^2)_k}.
\end{align}
Summing over $i$ and $j$ first using \eqref{Euler-2}, we deduce that
\begin{align}\label{F-start}
F(u,v,w)=\sum_{k=0}^\infty \frac{q^{k^2}w^k}{(q^2;q^2)_k}(-uq^{\frac{1}{2}-k};q)_\infty (-vq^{1-k};q^2)_\infty.
\end{align}

(1) From \eqref{F-start} we have
\begin{align}
&F(q^{1/2},q,q^{-1})=\sum_{k=0}^\infty \frac{q^{k^2-k}}{(q^2;q^2)_k}(-q^{1-k};q)_\infty (-q^{2-k};q^2)_\infty \nonumber \\
&=(-q;q)_\infty \sum_{k=0}^\infty \frac{q^{(k^2-k)/2}(-1;q)_k}{(q^2;q^2)_k}(-q^{2-k};q^2)_\infty \nonumber \\
&=(-q;q)_\infty (S_0(q)+S_1(q)). \label{S-split}
\end{align}
Here $S_0(q)$ and $S_1(q)$ correspond to the sums with even and odd values of $k$, respectively.

We have
\begin{align}
&S_0(q)=\sum_{k=0}^\infty \frac{q^{2k^2-k}(-1;q)_{2k}(-q^{2-2k};q^2)_\infty}{(q^2;q^2)_{2k}} \nonumber \\
&=(-q^2;q^2)_\infty \sum_{k=0}^\infty \frac{q^{k^2}(-1;q)_{2k}(-1;q^2)_k}{(q^2;q^2)_{2k}} \nonumber \\
&=\frac{1}{2}\left(3\frac{(q^2;q^2)_\infty^2}{(q;q)_\infty^2}-(q;q)_\infty^2 \right). \quad \text{(by \eqref{key-id-1})} \label{S0-result}
\end{align}
Similarly,
\begin{align}
&S_1(q)=\sum_{k=0}^\infty \frac{q^{2k^2+k}(-1;q)_{2k+1}(-q^{1-2k};q^2)_\infty}{(q^2;q^2)_{2k+1}} \nonumber \\
&=(-q;q^2)_\infty \sum_{k=0}^\infty \frac{q^{k^2+k}(-1;q)_{2k+1}(-q;q^2)_k}{(q^2;q^2)_{2k+1}} \nonumber \\
&=\frac{1}{2}\left(3\frac{(q^2;q^2)_\infty^2}{(q;q)_\infty^2}+(q;q)_\infty^2  \right). \quad \text{(by \eqref{key-id-3})} \label{S1-result}
\end{align}
Substituting \eqref{S0-result} and \eqref{S1-result} into \eqref{S-split}, we obtain \eqref{thm-id-1}.

(2) From \eqref{F-start} we have
\begin{align}
&F(q^{1/2},q,1)=\sum_{k=0}^\infty \frac{q^{k^2}}{(q^2;q^2)_k}(-q^{1-k};q)_\infty (-q^{2-k};q^2)_\infty \nonumber \\
&=(-q;q)_\infty \sum_{k=0}^\infty \frac{q^{(k^2+k)/2}(-1;q)_k(-q^{2-k};q^2)_\infty}{(q^2;q^2)_k} \nonumber \\
&=(-q;q)_\infty (S_0(q)+S_1(q)). \label{T-split}
\end{align}
Here $S_0(q)$ and $S_1(q)$ correspond to the sums with even and odd values of $k$, respectively.

We have
\begin{align}
&S_0(q)=\sum_{k=0}^\infty \frac{q^{2k^2+k}(-1;q)_{2k}(-q^{2-2k};q^2)_\infty}{(q^2;q^2)_{2k}} \nonumber \\
&=(-q^2;q^2)_\infty \sum_{k=0}^\infty \frac{q^{k^2+2k}(-1;q)_{2k}(-1;q^2)_k}{(q^2;q^2)_{2k}} \nonumber \\
&=\frac{1}{2}\left(\frac{(q^2;q^2)_\infty^2}{(q;q)_\infty^2}+(q;q)_\infty^2  \right). \quad \text{(by \eqref{key-id-2})} \label{T0-result}
\end{align}
Similarly,
\begin{align}
&S_1(q)=\sum_{k=0}^\infty \frac{q^{2k^2+3k+1}(-1;q)_{2k+1}(-q^{1-2k};q^2)_\infty}{(q^2;q^2)_{2k+1}} \nonumber \\
&=(-q;q^2)_\infty \sum_{k=0}^\infty \frac{q^{k^2+3k+1}(-1;q)_{2k+1}(-q;q^2)_k}{(q^2;q^2)_{2k+1}} \nonumber \\
&=\frac{1}{2}\left(\frac{(q^2;q^2)_\infty^2}{(q;q)_\infty^2}-(q;q)_\infty^2  \right). \quad \text{(by \eqref{key-id-4})} \label{T1-result}
\end{align}
Substituting \eqref{T0-result} and \eqref{T1-result} into \eqref{T-split}, we obtain \eqref{thm-id-2}.
\end{proof}

Before we present the proof of the last theorem, we recall the following identity \cite[Lemma 2.2, Eq.\ (2.6)]{WW24-1}: for $n\geq 0$,
\begin{align}\label{id-transform}
\frac{q^{n(n-1)/2}}{(q;q)_n}=\sum_{i+j=n} \frac{q^{i^2+j^2-i}}{(q^2;q^2)_i(q^2;q^2)_j}.
\end{align}
Let $\eta(\tau):=q^{1/24}(q;q)_\infty$ ($q=e^{2\pi i\tau}$, $\mathrm{Im}~ \tau>0$) be the Dedekind eta function. It is well known that $\eta(\tau)$ is a modular form of weight $1/2$.

\begin{proof}[Proof of Theorem \ref{thm-counterexample}]

We can rewrite the sum $F(q^{b_1},q^{b_2},q^{b_3})$ defined in \eqref{F-defn} as
\begin{align}
&\sum_{n,k,\ell\geq 0} \frac{q^{\frac{1}{2}n^2+k^2+\ell^2-n\ell-k\ell+b_1n+b_2k+b_3\ell}}{(q;q)_n(q^2;q^2)_k(q^2;q^2)_\ell}
\nonumber \\
&=\sum_{i,j,k,\ell\geq 0} \frac{q^{i^2-i+j^2+k^2+\ell^2-(i+j)\ell -k\ell+(b_1+\frac{1}{2})(i+j)+b_2k+b_3\ell}}{(q^2;q^2)_i(q^2;q^2)_j(q^2;q^2)_k(q^2;q^2)_\ell}  \quad \text{(by \eqref{id-transform}) }\nonumber \\
&=\sum_{i,j,k,\ell \geq 0} \frac{q^{i^2+j^2+k^2+\ell^2-i\ell -j\ell -k\ell +(b_1-\frac{1}{2})i+(b_1+\frac{1}{2})j+b_2k+b_3\ell}}{(q^2;q^2)_i(q^2;q^2)_j(q^2;q^2)_k(q^2;q^2)_\ell}=f_{A,B,0}(q^2), \label{122-f}
\end{align}
where $A$ is stated in Theorem \ref{thm-counterexample} and
$$B=\Big(\frac{1}{2}b_1-\frac{1}{4},\frac{1}{2}b_1+\frac{1}{4},\frac{1}{2}b_2,\frac{1}{2}b_3\Big)^\mathrm{T}.$$
Setting $(b_1,b_2,b_3)=(1/2,1,-1)$ and $(1/2,1,0)$, we deduce from \eqref{thm-id-1}, \eqref{thm-id-2} and \eqref{122-f} that
\begin{align}
f_{A,B_1,1/16}(q^2)=3\frac{\eta^3(2\tau)}{\eta^3(\tau)}, \quad f_{A,B_2,1/16}(q^2)=\frac{\eta^3(2\tau)}{\eta^3(\tau)}.
\end{align}
 Hence $f_{A,B_i,1/16}(q)$ are modular for $i=1,2$.

In the same way, we can rewrite the sum of the type in \eqref{WW-id-1} and \eqref{WW-id-2} as
\begin{align}
&\sum_{n,k,\ell\ge 0}\frac{q^{\frac{3}{2}n^2+2k^2+4\ell^2+2nk+4n\ell+4k\ell+b_1n+b_2k+b_3\ell}}{(q;q)_n(q^2;q^2)_k(q^2;q^2)_\ell} \nonumber \\
&=\sum_{i,j,k,\ell\geq 0} \frac{q^{i^2+j^2-i+(i+j)^2+2k^2+4\ell^2+2(i+j)k+4(i+j)\ell+4k\ell +(b_1+\frac{1}{2})(i+j)+b_2k+b_3\ell} }{(q^2;q^2)_i(q^2;q^2)_j(q^2;q^2)_k(q^2;q^2)_\ell}  \quad \text{(by \eqref{id-transform}) } \nonumber \\
&=\sum_{i,j,k,\ell \geq 0} \frac{q^{2i^2+2j^2+2k^2+4\ell^2+2ij+2ik+2jk+4i\ell +4j\ell+4k\ell +(b_1-\frac{1}{2})i+(b_1+\frac{1}{2})j+b_2k+b_3\ell} }{(q^2;q^2)_i(q^2;q^2)_j(q^2;q^2)_k(q^2;q^2)_\ell} \nonumber \\
&=f_{A^\star,\widetilde{B},0}(q^2). \label{M-f}
\end{align}
Here $A^\star=A^{-1}$ and
\begin{align}
\widetilde{B}=\left(\frac{1}{2}b_1-\frac{1}{4},\frac{1}{2}b_1+\frac{1}{4},\frac{1}{2}b_2,\frac{1}{2}b_3\right)^\mathrm{T}.
\end{align}
In particular, setting $(b_1,b_2,b_3)=(1/2,1,0)$ and $(5/2,3,4)$ we see that the vector $\widetilde{B}$ are exactly $B_1^\star$ and $B_2^\star$ stated in \eqref{exam-data-2}. Hence from \eqref{WW-id-1}, \eqref{WW-id-2} and \eqref{M-f} we see that
\begin{align}
f_{A^\star,B_1^\star,C'}(q^2)=\frac{1}{4}q^{2C'-\frac{1}{24}}\left(3\frac{\eta(2\tau)}{\eta(\tau)}+\frac{\eta^3(\tau)}{\eta(2\tau)}\right), \label{dual-exp-1} \\
f_{A^\star,B_1^\star,C'}(q^2)=\frac{1}{4}q^{2C'-\frac{25}{24}}\left(\frac{\eta(2\tau)}{\eta(\tau)}-\frac{\eta^3(\tau)}{\eta(2\tau)}\right). \label{dual-exp-2}
\end{align}
This shows that $f_{A^\star,B_i^\star,C'}(q)$ ($i=1,2$) can essentially be written as a sum of two modular forms of weights 0 and 1, respectively. Therefore, it cannot be modular for any $C'$.
\end{proof}

\subsection*{Acknowledgements}
The author thanks Prof.\ Haowu Wang and Boxue Wang for some helpful discussions. This work was supported by the National Key R\&D Program of China (Grant No.\ 2024YFA1014500).

\end{document}